\newtheorem{theorem}{Theorem}[section]
\theoremstyle{definition}
\newtheorem{definition}[theorem]{Definition}
\newtheorem{proposition}[theorem]{Proposition}
\newtheorem{corollary}[theorem]{Corollary}
\theoremstyle{remark}
\newtheorem{remark}[theorem]{Remark}
\numberwithin{equation}{section}
\begin{document}

\title[The Montel theorem for polynomial dynamics]{An alternative proof of the non-Archimedean Montel theorem for polynomial dynamics}


\author{LEE, Junghun}
\address{Graduate School of Mathematics, Nagoya University, 
Nagoya 464-8602, Japan}
\email{m12003v@math.nagoya-u.ac.jp}
\thanks{The author would like to thank to the scientific committee and the referees. He also thanks Professor Charles Favre for his comment. He also deeply appreciates to Professor Tomoki Kawahira.}


\subjclass[2010]{Primary 37P40, Secondly 11S82}

\date{18 March, 2014}

\begin{abstract}
We will see an alternative proof of the non-Archimedean Montel theorem for polynomial dynamics.
\end{abstract}

\maketitle


\section{Introduction}

P. Montel has proved that a family of holomorphic functions defined on an open set in the field of complex numbers is normal if the family is uniformly bounded in \cite{Mont16}.
In the light of the Arzel\`a-Ascoli theorem, we can restate Montel's theorem in terms of equicontinuity.
It has played a crucial role in complex analysis.
Among its many applications, we focus on the applications to dynamical systems generated by iterations of functions.
One of the main topics in \textit{the theory of complex dynamical systems}, which investigates the iterations of a given rational map over the field of complex numbers, is to determine whether or not a point is in the \textit{Fatou set}, which is defined as the largest open set on which the iterations of a given rational map are equicontinuous.
Montel's theorem can be a useful criterion to achieve it.
Moreover, Montel's theorem can be applied to investigate a number of basic properties of the \textit{Julia set}, which is defined as the complement of the Fatou set. We will see the properties of the Julia set in Section $2$.
Furthermore, combined with some basic knowledge of the theory of complex dynamical systems, we can also show the density of repelling periodic points in the Julia set.
See \cite{Miln06} for more details on the theory of complex dynamical systems.

Not only in the theory of complex dynamical systems, but also in the \textit{theory of non-Archimedean dynamical systems}, which is a theory of dynamical systems of the projective line over algebraically closed, complete, and non-Archimedean fields and rational maps over the field, a criterion such as Montel's theorem is also important to find the Fatou set.
See \cite{Silv07} for more details on the theory of non-Archimedean dynamical systems.

L-C. Hsia has originally proved the non-Archimedean Montel theorem, which is also called Hsia's criterion, and has applied it to show that the Julia set is contained in the topological closure of the set of periodic points in \cite{Hs00}.
Moreover, C. Favre, J. Kiwi, and E. Trucco have proved several versions of Montel's theorem in a non-Archimedean setting and have applied them to dynamics on the Berkovich projective and affine line in \cite{FKT12}.

The aim of this paper is to give an alternative proof of Hsia's criterion for polynomial dynamics by using non-Archimedean Green functions, see Section $2$ of this paper for the explicit statement.

In the theory of complex dynamical systems, B. Branner and J. H. Hubbard introduced Green functions in \cite{BH83} to investigate the orbit of critical points of polynomial maps over the field of complex numbers.
On the other hand, S. Kawaguchi and J. H. Silverman introduced non-Archimedean Green functions and proved that the Green function is closely related to the Fatou set in \cite{KS09}.
Our new proof uses two fundamental results on Green functions, namely \cite[Theorem $6$]{KS09} and \cite[Proposition $3$]{KS09}.
We will see a rigorous definition of non-Archimedean Green functions and some properties in Section $3$.

\section{Equicontinuity and Montel's theorems}

In this section, we will state Montel's theorem for polynomial dynamics.
Let us begin with the definition of equicontinuity.

\begin{definition}[Equicontinuity]\label{2.1}
Let $X$ be a metric space with a metric $d$, $U$ be an open set, and $f : X \rightarrow X$ be a continuous map. We say that $f$ is \textit{equicontinuous} on $U$ if for every $x$ in $U$ and every $\epsilon > 0$, the element $x$ has an open neighborhood $V = V_x$ such that for any $y$ in $V_x$ and any $k$ in $\{ 0, 1, \cdots, \}$, we have
$$
d(f^k(y), f^k(x)) < \epsilon.
$$
\end{definition}

Let $K$ be the field $\mathbb{C}$ of complex numbers with the Euclidean norm or an algebraically closed field $\mathbb{K}$ with a complete and non-Archimedean norm and $|\cdot|$ be the norm on $K$.
We denote the \textit{projective line}, which is defined as the union of $K$ and infinity, over $K$ by $\mathbb{P}^1_K$ and consider it as a metric space with respect to the chordal metrics defined as follows:

\begin{definition}[Chordal metric]\label{2.2}
The \textit{chordal metric} is defined by 
\begin{align*}
\rho(z, w) := 
\begin{cases}
\displaystyle{ |z - w | \over ||z|| \cdot ||w|| } \quad &(z, w \in K), \\\\
\displaystyle{ 1 \over ||z|| }  \quad &(z \in K, w = \infty).
\end{cases}
\end{align*}
on $\mathbb{P}^1_K$ where 
\begin{align*}
||z|| :=
\begin{cases}
\sqrt{1 + |z|^2} \quad &(z \in \mathbb{C}), \\\\
\max \{ 1, |z| \} \quad &(z \in \mathbb{K}).
\end{cases}
\end{align*} 
\end{definition}

Now we state Montel's theorems for polynomial dynamics.

\begin{theorem}[Montel's theorem for polynomial dynamics]\label{2.3}
Let $U$ be an open subset of $K$ and $f$ be a polynomial map over $K$ of degree $d \geq 2$.
Suppose that there exist two distinct elements $\alpha$ and $\beta$ in $K$ such that
$$
\bigcup_{k = 0}^{\infty}f^{k}(U) \cap \{ \alpha, \beta \} = \emptyset.
$$
Then, the polynomial map $f$ is equicontinuous on $U$ with respect to the chordal metric on $\mathbb{P}^1_K$.
Moreover, if $K = \mathbb{K}$, then it is also true even when $\alpha = \beta$. 
\end{theorem}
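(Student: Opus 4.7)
The plan is to exploit the non-Archimedean Green function
$G_f(z) = \lim_{n\to\infty} d^{-n}\log^{+}|f^n(z)|$ attached to the polynomial $f$,
together with two of its properties proved by Kawaguchi and Silverman in
\cite{KS09}: the continuity and functional equation
$G_f \circ f = d\cdot G_f$ supplied by \cite[Proposition~3]{KS09}, and the
equicontinuity of the family $\{f^n\}$ on the escape locus
$\Omega_f = \{G_f > 0\}$ given by \cite[Theorem~6]{KS09}. Writing
$K_f = \{G_f = 0\}$ for the filled Julia set and $J_f = \partial K_f$, the
Fatou set $F_f$ of $f$ contains $\Omega_f \cup \mathrm{int}(K_f)$. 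The overall
goal is to show that, under the omission hypothesis, $U$ is contained in $F_f$.

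The first step is to establish equicontinuity of $\{f^n\}$ on every open
subset of $F_f$. On $\Omega_f$ this is exactly \cite[Theorem~6]{KS09}, since
the relation $G_f(f^n(z)) = d^n G_f(z)$ forces $f^n(z) \to \infty$ at a
uniform geometric rate in the chordal metric. On $\mathrm{int}(K_f)$ one
gets equicontinuity from boundedness of orbits together with the strong
triangle inequality applied on any ball contained in $\mathrm{int}(K_f)$.
This reduces the theorem to showing that $U \cap J_f = \emptyset$.

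For this reduction I would prove that for every $\alpha \in K$ the set
$\bigcup_{k\geq 0} f^{-k}(\alpha)$ is dense in $J_f$. If some open ball
$B \subset U$ met $J_f$ but avoided every preimage of $\alpha$, the
factorisation $f^k(z) - \alpha = c_k \prod_{w \in f^{-k}(\alpha)} (z - w)$
would give a uniform lower bound on $\log|f^k(z) - \alpha|$ for $z \in B$;
dividing by $d^k$ and passing to the limit via the functional equation of
\cite[Proposition~3]{KS09} would then yield $G_f \geq \varepsilon > 0$ on
$B$, contradicting $B \cap K_f \neq \emptyset$. Granted this density,
$U \cap J_f \neq \emptyset$ would produce some $k$ with $\alpha \in f^k(U)$,
contradicting the hypothesis, so $U \subset F_f$ as required. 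Over $\mathbb{K}$
the exceptional set of a polynomial reduces to $\{\infty\}$, which is why a
single omitted value $\alpha = \beta$ suffices; over $\mathbb{C}$ one
additional finite exceptional point may occur, forcing the classical
two-point requirement.

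The main obstacle I expect is this density-of-preimages step: carrying it
out purely from Green-function estimates, rather than from potential theory
or Berkovich-space equidistribution, requires a careful lower bound on
$\log|f^k(z) - \alpha|$ and a check that the argument near $\infty$ can be
performed by conjugating by $1/z$, so that the chordal metric there falls
within the scope of \cite[Theorem~6]{KS09}.
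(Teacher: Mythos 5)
The decisive step of your plan, the density of $\bigcup_{k\ge 0}f^{-k}(\alpha)$ in $J_f$ via a Green-function estimate, breaks down as written. From the factorisation $f^k(z)-\alpha=c_k\prod_{w\in f^{-k}(\alpha)}(z-w)$ and the hypothesis that the ball $B=\overline{D}(b,s)$ avoids every preimage of $\alpha$, what the ultrametric actually gives for $z\in B$ is $|z-w|=|b-w|$ for every root $w$ (since $|z-b|\le s<|b-w|$), hence $|f^k(z)-\alpha|=|f^k(b)-\alpha|$: a quantity constant in $z$ but depending on $k$, and in no way bounded below by $e^{\varepsilon d^k}$. Dividing a $k$-independent lower bound $\log|f^k(z)-\alpha|\ge\log c$ by $d^k$ and letting $k\to\infty$ yields only $G_f\ge 0$ on $B$, which holds everywhere and contradicts nothing; to conclude $G_f\ge\varepsilon>0$ you would need doubly exponential growth of $|f^k(z)-\alpha|$, and avoiding the preimages of $\alpha$ gives no such growth. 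The contradiction you want comes instead from the constancy you have in fact established: pick $z_1\in B\cap K_f$ (possible since $B$ meets $J_f\subset K_f$); then $|f^k(z)-\alpha|=|f^k(z_1)-\alpha|$ is bounded uniformly in $k$, so every orbit from $B$ is bounded, $G_f\equiv 0$ on $B$, and $B\subset\mathrm{int}(K_f)$, contradicting $B\cap\partial K_f\neq\emptyset$. Equivalently, $G_f$ is constant on $B$ and one concludes by the equivalence in Theorem \ref{3.2}. Repaired this way, your argument collapses onto the paper's proof, which applies exactly this ultrametric constancy (with $\alpha=0$, using that polynomial images of disks are disks, \cite[Proposition 5.16]{Silv07}) to show $G_f$ is constant on each disk contained in $U$ and then invokes \cite[Theorem 6]{KS09}; the detour through $J_f=\partial K_f$, density of preimages, and a separate treatment of $\Omega_f$ and $\mathrm{int}(K_f)$ is then unnecessary.

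Two subsidiary points. First, equicontinuity on $\Omega_f=\{G_f>0\}$ is not literally \cite[Theorem 6]{KS09}: that result equates equicontinuity with local constancy of $G_f$, so you would still have to check that $G_f$ is locally constant on the escape locus (true, again by the ultrametric, but it needs an argument); likewise ``boundedness of orbits plus the strong triangle inequality'' on $\mathrm{int}(K_f)$ is not enough by itself --- you need a Schwarz-type radius estimate (if $f^k(\overline{D}(a,r))\subset\overline{D}(0,R)$ for all $k$, then $f^k(\overline{D}(a,\delta))$ has radius at most $R\delta/r$), which is what makes such points Fatou. Second, the closing heuristic is inaccurate: over $\mathbb{K}$ a polynomial conjugate to $z^d$ does have a finite exceptional point, so the exceptional set does not ``reduce to $\{\infty\}$''; the one-omitted-value statement is genuinely ultrametric rather than a matter of counting exceptional points, as the complex example $f(z)=z^2$ with $U$ a small disk meeting the unit circle shows, where $\bigcup_k f^k(U)$ omits $0$ yet $f$ is not equicontinuous on $U$.
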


A proof of Theorem $2.3$ is given in \cite[Theorem $3.7$]{Miln06} and \cite[Theorem $2.2$]{Hs00} for complex and non-Archimedean case respectively.
In this paper, we give a new proof of Theorem \ref{2.3} for the case $K = \mathbb{K}$.
We remark that the original statement holds in a broader case than Theorem \ref{2.3}.

Let us see some applications of Theorem \ref{2.3} to the theory of dynamical systems.

\begin{corollary}\label{2.4}
Let $f$ be a polynomial map over $K$ of degree $d \geq 2$.
Then, the following $1$ to $4$ hold:
\begin{enumerate}
\item The Julia set of $f$ has empty interior if the Fatou set is non-empty.\\
\item The Julia set of $f$ is uncountable if the Julia set is non-empty.\\
\item The Julia set of $f$ has no isolated point.\\
\item The backward orbit of any point in the Julia set is dense in the Julia set.
\end{enumerate}
\end{corollary}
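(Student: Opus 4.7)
The plan is to derive all four items from Theorem~\ref{2.3}, exploiting that the Julia set $J = \mathbb{P}^1_K \setminus F$ is closed and both forward- and backward-invariant under the polynomial $f$. Writing $O^+(z) := \{f^k(z) : k \geq 0\}$ and $O^-(z) := \bigcup_{k \geq 0} f^{-k}(z)$ for the forward and backward orbit, I would prove (4) first, then deduce (3), then obtain (2) from (3); item (1) is a separate direct application of Montel.

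For (4), I fix $z_0 \in J$ and let $U$ be an arbitrary neighborhood of a point in $J$. Since $U \not\subset F$, the iterates $\{f^k\}$ are not equicontinuous on $U$, so by Theorem~\ref{2.3} the orbit $\bigcup_k f^k(U)$ can omit at most one point of $K$ (and none over $\mathbb{K}$). A quick standard argument shows that any such omitted point $e$ must satisfy $f^{-1}(e) = \{e\}$: otherwise a second preimage $e' \neq e$ would also be omitted, contradicting Theorem~\ref{2.3}. Hence $e$ is a super-attracting fixed point for $f$, in particular $e \in F$, so $z_0 \in J$ is never the omitted point. Therefore $z_0 \in f^k(U)$ for some $k \geq 0$, so $U$ meets $O^-(z_0)$.

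For (1), I suppose for contradiction that a non-empty open $U \subset J$ exists. The same application of Theorem~\ref{2.3} together with forward invariance $f^k(U) \subset J$ yields $J \supset K \setminus \{e\}$ for some $e$, so $F \cap K \subset \{e\}$. Since $F$ is open but neither a singleton of $K$ nor $\{\infty\}$ alone is open in $\mathbb{P}^1_K$, this forces $F = \emptyset$, contradicting the hypothesis.

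For (3) and (2), I suppose that some $z_0 \in J$ is isolated, so $V \cap J = \{z_0\}$ for some open $V \ni z_0$. Applying (4) to each $y \in J$, the density of $O^-(y) \subset J$ inside $V$ together with $V \cap J = \{z_0\}$ forces $z_0 \in O^-(y)$, equivalently $y \in O^+(z_0)$; thus $J \subset O^+(z_0)$. Combined with $O^-(z_0) \subset J$, every preimage of $z_0$ is a forward iterate of $z_0$, which makes $z_0$ periodic; but then $O^+(z_0)$ is finite whereas $O^-(z_0)$ is infinite (as $z_0$ is non-exceptional), the desired contradiction. Item (2) then follows because (3) makes $J$ a non-empty perfect closed subset of the compact metric space $\mathbb{P}^1_K$, and such sets are uncountable by a standard nested-open-set construction. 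I expect the main obstacle to be the auxiliary claim in (4) that omitted orbit values are exceptional super-attracting fixed points; once this is granted, the rest proceeds directly from Theorem~\ref{2.3} and the invariance of $J$.
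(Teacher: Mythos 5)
The paper does not actually prove Corollary \ref{2.4}: its ``proof'' is a pointer to Milnor (Corollaries 4.11--4.15) for $K=\mathbb{C}$ and to Silverman (Corollary 5.32) for $K=\mathbb{K}$. Your proposal instead derives the four items directly from Theorem \ref{2.3}, which is essentially the classical argument those references carry out: non-equicontinuity on any open set meeting $J$ forces the union of forward images to omit at most one point of $K$ (and none over $\mathbb{K}$); an omitted point is totally invariant, hence a superattracting fixed point lying in the Fatou set, which gives (4) and (1); (3) follows from (4) via the periodicity/exceptional-point contradiction, and (2) from (3). The logic is sound, and the auxiliary facts you invoke (complete invariance of $J$, that a finite exceptional point of a polynomial of degree $d\geq 2$ is a superattracting fixed point in $F$, and that non-exceptional points have infinite backward orbit) are exactly the standard inputs used in the cited texts, valid over both $\mathbb{C}$ and $\mathbb{K}$. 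One correction is needed in item (2): $\mathbb{P}^1_{\mathbb{K}}$ is \emph{not} compact in the non-Archimedean case (e.g.\ $\mathbb{C}_p$ is not locally compact), so you cannot appeal to compactness; however, the chordal metric on $\mathbb{P}^1_{\mathbb{K}}$ is complete and $J$ is closed, so $J$ is a non-empty perfect complete metric space, and the same nested-ball (Baire category) argument yields uncountability once ``compact'' is replaced by ``complete.''
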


\begin{proof}[Proof of Corollary \ref{2.4}]
See \cite[Corollary $4.11$, $4.13$, $4.14$, and $4.15$]{Miln06} for the case $K = \mathbb{C}$ and see \cite[Corollary $5.32$]{Silv07} for the case $K = \mathbb{K}$.
\end{proof}

\begin{remark}\label{2.5}
If $K = \mathbb{C}$, it is known that the Julia set is always non-empty if the rational map over $K$ has degree grater than $1$, see \cite[Lemma $4.8$]{Miln06} for more details.
On the other hand, if $K = \mathbb{K}$, it is easy to check that there is a rational map with empty Julia set even when the degree of the rational map is greater than $1$.
\end{remark}

\begin{remark}\label{2.6}
Corollary \ref{2.4} holds for the case when $f$ is a rational map of degree $d \geq 2$.
\end{remark}

\section{Non-Archimedean Green functions}

In this section, we will see the definition and properties of non-Archimedean Green functions.
As in \cite[equation ($2$)]{KS09}, we define non-Archimedean Green functions as follows.

\begin{proposition}[Green function]\label{3.1}
Let $f$ be a polynomial map over $\mathbb{K}$ of degree $d \geq 2$.
Then the \textit{Green function} of $f$
\begin{equation*}
\begin{split}
G_f : \mathbb{K} &\rightarrow \mathbb{R} \\
z &\mapsto \lim_{n \rightarrow \infty} {1 \over d^n} \cdot \log(||f^n(z)||) - \log(||z||)
\end{split}
\end{equation*}
is well-defined.
Moreover, the Green function can be written as
$$
G_{f} (z) = \sum_{k = 0}^{\infty} {1 \over d^k}\log \left({ || f^{k + 1}(z) ||^{1 / d} \over || f^{k}(z) || } \right)
$$
for any $z$ in $\mathbb{K}$.
\end{proposition}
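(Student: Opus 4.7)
My plan is to establish both claims simultaneously by observing that the series is telescoping, and then proving convergence via a uniform bound on the "distortion" $|\log\|f(w)\| - d\log\|w\||$.

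First I would unpack the $k$th term of the displayed series:
\begin{equation*}
\frac{1}{d^k}\log\!\left(\frac{\|f^{k+1}(z)\|^{1/d}}{\|f^k(z)\|}\right)
= \frac{1}{d^{k+1}}\log\|f^{k+1}(z)\| - \frac{1}{d^k}\log\|f^k(z)\|.
\end{equation*}
The partial sum from $k=0$ to $k=n-1$ therefore telescopes to $\frac{1}{d^n}\log\|f^n(z)\| - \log\|z\|$. So showing the series converges and showing the limit in the definition exists are literally the same statement, and when either holds the two expressions for $G_f(z)$ coincide.

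Next, the analytic heart of the proof: I would prove that there is a constant $C = C(f) > 0$ such that
\begin{equation*}
\bigl|\log\|f(w)\| - d\log\|w\|\bigr| \leq C \quad \text{for every } w \in \mathbb{K}.
\end{equation*}
Writing $f(w) = a_d w^d + \cdots + a_0$, the non-Archimedean triangle inequality gives $|f(w)| \leq \max_i |a_i|\cdot|w|^i \leq M \max(1,|w|)^d$ with $M = \max(1,\max_i|a_i|)$, so $\|f(w)\| \leq M \|w\|^d$ everywhere. For the reverse bound, I would split on $|w|$: when $|w|$ exceeds $R := \max_{i<d}(|a_i|/|a_d|)^{1/(d-i)}$, the dominant term wins and $|f(w)| = |a_d|\cdot|w|^d$ exactly, giving $\log\|f(w)\| - d\log\|w\| = \log|a_d|$; for the bounded region $|w| \leq \max(1,R)$, both $\|w\|^d$ and $\|f(w)\|$ lie in a fixed compact range of values, so the difference of logarithms is bounded by inspection. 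Taking $C$ to be the maximum of the two bounds finishes this step.

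Applying this uniform bound with $w = f^k(z)$ yields
\begin{equation*}
\left|\frac{1}{d^k}\log\!\left(\frac{\|f^{k+1}(z)\|^{1/d}}{\|f^k(z)\|}\right)\right|
= \frac{1}{d^{k+1}}\bigl|\log\|f(f^k(z))\| - d\log\|f^k(z)\|\bigr|
\leq \frac{C}{d^{k+1}},
\end{equation*}
and since $d \geq 2$ the series $\sum_{k} C/d^{k+1}$ converges. Hence the telescoping series converges absolutely and uniformly in $z$ on any region where it already makes sense pointwise, proving that the limit defining $G_f(z)$ exists and agrees with the claimed series expansion.

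The main obstacle is the uniform bound in the second paragraph: it is the only place where the non-Archimedean hypothesis is genuinely used (to make the leading term of $f$ strictly dominate on $|w|>R$), and the case analysis on the size of $|w|$ is what turns the formal telescoping identity into a convergence statement.
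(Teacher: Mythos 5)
Your proof is correct, but note that the paper itself does not prove Proposition \ref{3.1} at all: it simply cites \cite[Proposition 3]{KS09}, so you have supplied the argument that the paper outsources. What you write is essentially the standard Kawaguchi--Silverman argument: the telescoping identity reduces both claims (existence of the limit and the series expression) to a single convergence statement, and the uniform distortion bound $\bigl|\log\|f(w)\|-d\log\|w\|\bigr|\le C$, obtained from the ultrametric inequality plus the dominance of the leading term for $|w|$ large, makes the telescoped series geometrically convergent. Your version buys self-containedness and makes visible exactly where the non-Archimedean hypothesis enters; the paper buys brevity. One small imprecision worth fixing: on the region $|w|>\max(1,R)$ you assert $\log\|f(w)\|-d\log\|w\|=\log|a_d|$, but this equality uses $\|f(w)\|=|f(w)|$, i.e.\ $|a_d|\,|w|^d\ge 1$; when $|a_d|\,|w|^d<1$ you instead get a quantity trapped between $\log|a_d|$ and $0$, so the difference is still bounded by $|\log|a_d||$ and the constant $C$ survives. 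Also observe that working with $\|\cdot\|=\max(1,|\cdot|)$ rather than $|\cdot|$ is what saves your bounded-region case at zeros of $f$, since $\log\|f(w)\|\ge 0$ there; it is worth saying this explicitly rather than appealing to a ``compact range of values.''
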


\begin{proof}[Proof of Proposition \ref{3.1}]
See \cite[Proposition $3$]{KS09} for the proofs.
\end{proof}

The following is a one dimensional version of \cite[Theorem $6$]{KS09}.

\begin{theorem}\label{3.2}
Let $f$ be a polynomial map over $\mathbb{K}$ of degree $d \geq 2$.
Then, the Green function $G_f$ of $f$ is H\"older continuous. 
Moreover, the following statements are equivalent:
\begin{enumerate}
\item The Green function $G_f$ is locally constant on an open neighborhood of $z$.
\item  The polynomial map $f$ is equicontinuous on an open neighborhood of $z$ with respect to the chordal metric.
\end{enumerate}
\end{theorem}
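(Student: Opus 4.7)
The plan is to establish H\"older continuity first using the series expansion from Proposition \ref{3.1}, and then derive the equivalence from the functional equation for the Green function. For H\"older continuity, set $\phi(z) := \frac{1}{d}\log\|f(z)\| - \log\|z\|$, so that Proposition \ref{3.1} gives $G_f(z) = \sum_{k=0}^{\infty} d^{-k}\phi(f^k(z))$. Writing $f(z) = a_d z^d + \cdots + a_0$ and using the strong triangle inequality, for $|z|$ sufficiently large one has $|f(z)| = |a_d|\cdot|z|^d$, so $\phi$ reduces to the constant $\frac{1}{d}\log|a_d|$ outside a bounded region and is continuous, hence Lipschitz on any fixed compact set. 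To bound $|G_f(z)-G_f(w)|$ for nearby $z, w$, I would split the series at an index $N$ chosen so that $d^{-N}$ is comparable to a small positive power of $|z-w|$: the head is controlled by the local Lipschitz estimate together with an inequality $|f^k(z)-f^k(w)| \leq C^k |z-w|$ valid on any fixed bounded set, and the tail by the uniform bound $|\phi|\leq M$ summed against $d^{-k}$. The delicate point is balancing these two estimates to extract a single positive H\"older exponent.

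For the implication (1) $\Rightarrow$ (2), I would pass to the auxiliary function $\widehat{G}_f(z) := G_f(z) + \log\|z\| = \lim_{n\to\infty} d^{-n}\log\|f^n(z)\|$, which satisfies the clean functional equation $\widehat{G}_f(f(z)) = d\cdot\widehat{G}_f(z)$. Suppose $G_f \equiv c$ on a neighborhood $U$ of $z$. If $c > 0$, then $\log\|f^n(w)\|$ grows like $cd^n$ uniformly for $w \in U$, so $f^n(w) \to \infty$ uniformly in the chordal metric and equicontinuity follows. If $c = 0$, then $\sup_{w \in U}|f^n(w)|$ stays uniformly bounded in $n$; shrinking $U$ to a small closed non-Archimedean disk $D$ of radius $r$, the non-Archimedean Cauchy estimate gives $|f^n(z)-f^n(w)| \leq (M/r)|z-w|$ for $z, w \in D$, where $M$ is the uniform bound on $|f^n|_D|$. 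This yields a Lipschitz estimate uniform in $n$, hence equicontinuity on $D$.

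For (2) $\Rightarrow$ (1), I would argue by contradiction. If $G_f$ were not locally constant on any neighborhood of $z$, continuity of $G_f$ would provide $w$ arbitrarily close to $z$ with $G_f(w) \neq G_f(z)$. Applying $\widehat{G}_f\circ f^n = d^n \widehat{G}_f$ gives
\[
\log\|f^n(w)\| - \log\|f^n(z)\| = d^n\bigl(G_f(w)-G_f(z)\bigr) + O(1),
\]
so $\|f^n(w)\|$ and $\|f^n(z)\|$ differ by arbitrarily large multiplicative factors as $n$ grows. A direct computation then shows that $\rho(f^n(w),f^n(z))$ fails to tend to zero as $w \to z$, contradicting equicontinuity at $z$. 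The main obstacle throughout is controlling the interaction between the multiplicatively growing orbits and the additively bounded summands in the series for $G_f$; the non-Archimedean hypothesis on $\mathbb{K}$ is essential here, both for simplifying $\phi$ near infinity via the strong triangle inequality and for the disk-based Cauchy estimate that handles the $c = 0$ case.
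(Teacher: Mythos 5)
Note first that the paper itself does not prove Theorem \ref{3.2}; it simply cites \cite[Theorem 6]{KS09}, so your proposal is an independent argument rather than a variant of the paper's. Your H\"older part and your direction (1) $\Rightarrow$ (2) are essentially workable, with one bookkeeping slip: since $G_f=\widehat G_f-\log\|\cdot\|$ and $\widehat G_f\geq 0$, the local constant $c$ can be negative, and $c=0$ does \emph{not} imply bounded orbits when $|z|>1$ (there $c=0$ forces $\widehat G_f(w)=\log|z|>0$, i.e.\ escape). The correct dichotomy is $\widehat G_f(z)>0$ versus $\widehat G_f(z)=0$; since $\log\|\cdot\|$ is locally constant on $\mathbb{K}$, local constancy of $G_f$ transfers to $\widehat G_f$, and then your two sub-arguments (uniform escape to $\infty$, and the Gauss-norm Lipschitz estimate $|f^n(z)-f^n(w)|\leq (M/r)|z-w|$ on a sub-disk of the filled Julia set, which is forward invariant and bounded) both go through.

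The genuine gap is in (2) $\Rightarrow$ (1). The step ``a large multiplicative discrepancy between $\|f^n(w)\|$ and $\|f^n(z)\|$ shows $\rho(f^n(w),f^n(z))$ fails to tend to zero'' is false when both orbits escape: if $1<\|v\|<\|u\|$ then $|u-v|=|u|$ and $\rho(u,v)=1/\|v\|$, so two points of enormous but wildly different absolute value are chordally very close (both sit near $\infty$). Quantitatively, if $\widehat G_f(z)>0$ and $\delta=|G_f(w)-G_f(z)|$ is small, then by the time $d^n\delta\sim 1$ one has $\log\|f^n(z)\|\approx d^n\widehat G_f(z)\approx\widehat G_f(z)/\delta$, so $\rho(f^n(w),f^n(z))\approx\|f^n(z)\|^{-1}$ is minuscule and no contradiction with equicontinuity is obtained; for smaller $n$ continuity keeps the orbits chordally close as well. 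Your computation does produce a contradiction precisely when the orbit of $z$ is bounded, say $\|f^n(z)\|\leq B$: then the gap forces some $n$ with $\|f^n(w)\|>B\geq\|f^n(z)\|$, whence $\rho(f^n(w),f^n(z))=1/\|f^n(z)\|\geq 1/B$, against equicontinuity with $\epsilon=1/B$. To close the escaping case you should not argue by contradiction at all: show directly that $\widehat G_f$ is locally constant on the basin of infinity, with no use of equicontinuity. Choose $R_0$ so that $|u|>R_0$ implies $|f(u)|=|a_d||u|^d>|u|$; on that region $\widehat G_f(u)=\log|u|+\frac{1}{d-1}\log|a_d|$ exactly, and $|f^n(\cdot)|$ is locally constant at $z$ because $|\cdot|$ is locally constant on $\mathbb{K}\setminus\{0\}$, so $\widehat G_f=d^{-n}\,\widehat G_f\circ f^n$ is locally constant at any $z$ with $|f^n(z)|>R_0$. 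With that case supplied (escaping points satisfy both (1) and (2) unconditionally), your bounded-orbit arguments complete the equivalence.
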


\begin{proof}[Proof of Theorem \ref{3.2}]
See \cite[Theorem $6$]{KS09} for the proof.
\end{proof}

\begin{remark}
Theorem \ref{3.2} implies that a point is contained in the Fatou set if and only if the Green function is locally constant at the point.
\end{remark}

\section{The proof of Theorem \ref{2.3}}

Let us close this paper with proof of Theorem \ref{2.3} when $K = \mathbb{K}$.

\begin{proof}[Proof of Theorem \ref{2.3}]
It is sufficient to prove the case when $\alpha = \beta = 0$.

Suppose that
$$
U = \overline{D}(a, r) := \{ z \in \mathbb{K} \mid |z - a| \leq r \}
$$
where $a \in \mathbb{K}$ and $r \in \mathbb{R}_{> 0}$.
Then, it follows from \cite[Proposition $5.16$]{Silv07} that
$$
f(\overline{D}(a, r)) = \overline{D}(f(a), s)
$$
where $s := \max\{ |f(z) - f(a)| \mid z \in U \} > 0$.
This implies that $f^k(U)$ is a closed disk in $\mathbb{K}$ centered at $f^k(a)$.
Moreover, since $f^k(U) \cap \{ 0\} = \emptyset$ and $f^k(U)$ is a disk in $\mathbb{K}$, we have
$$
|f^k(z)| = |f^k(z) - f^k(a) + f^k(a)| = \max\{ |f^k(z) - f^k(a)|, |f^k(a)| \} = |f^k(a)|
$$
for any $z \in U$ and any $k \in \{0, 1, \cdots \}$.
This implies that
\begin{equation*}
\begin{split}
{ || f^{k + 1}(z) ||^{1 / d} \over || f^k(z) || } 
= { || f^{k + 1}(a) ||^{1 / d} \over || f^k(a) || }
\end{split}
\end{equation*}
for any $z \in U$ and any $k \in \{0, 1, \cdots \}$.
By Proposition \ref{3.1}, we have 
$$
G_{f} (z) = \sum_{k = 0}^{\infty} {1 \over d^k}\log \left({ || f^{k + 1}(z) ||^{1 / d} \over || f^k(z) || } \right) = \sum_{k = 0}^{\infty} {1 \over d^k}\log \left( { || f^{k + 1}(a) ||^{1 / d} \over || f^k(a) || } \right) = G_f (a)
$$
for any $z \in U$.
Thus, the Green function of $f$ is constant on $U$ so, by Theorem \ref{3.2}, the polynomial map $f$ is equicontinuous on $U$ with respect to the chordal metric.
\end{proof}


\bibliographystyle{amsalpha}


\end{document}